\newtheorem{theorem}{Theorem}[section]
\newtheorem{lemma}[theorem]{Lemma}
\newtheorem{corollary}[theorem]{Corollary}
\newtheorem{proposition}[theorem]{Proposition}
\theoremstyle{definition}
\newtheorem{example}[theorem]{Example}
\theoremstyle{remark}
\newtheorem{remark}[theorem]{Remark}
\let\@int\int \def\int{\displaystyle\@int}
\let\@lim\lim \def\lim{\displaystyle\@lim}
\let\@sum\sum \def\sum{\displaystyle\@sum}
\let\@sup\sup \def\sup{\displaystyle\@sup}
\let\@inf\inf \def\inf{\displaystyle\@inf}
\let\@cap\cap \def\cap{\displaystyle\@cap}
\let\@cup\cup \def\cup{\displaystyle\@cup}
\let\@max\max \def\max{\displaystyle\@max}
\let\@min\min \def\min{\displaystyle\@min}
\let\@frac\frac \def\frac{\displaystyle\@frac}
\let\@iint\iint \def\iint{\displaystyle\@iint}
\def\epsilon{\varepsilon}
\numberwithin{equation}{section}
\begin{document}

\begin{frontmatter}
\title{Topologically conjugate classification of diagonal operators}

\author[a]{Yue Xin}
\ead{179929393@qq.com}
\author[b]{Bingzhe Hou\corref{*}}
\cortext[*]{Corresponding author.}\ead{houbz@jlu.edu.cn}
\address[a]{School of Mathematical Science,
Heilongjiang University, 150080, Harbin, P. R. China}
\address[b]{School of Mathematics, Jilin University, 130012, Changchun, P. R. China}

\begin{abstract}
Let $\ell^{p}$, $1\leq p<\infty$, be the Banach space of absolutely $p$-th power summable sequences and let $\pi_{n}$ be the natural projection to the $n$-th coordinate for $n\in\mathbb{N}$. Let $\mathfrak{W}=\{w_{n}\}_{n=1}^{\infty}$ be a bounded sequence of complex numbers. Define the
operator $D_{\mathfrak{W}}: \ell^{p}\rightarrow\ell^{p}$ by, for any $x=(x_{1},x_{2},\ldots)\in \ell^p$,
$\pi_{n}\circ D_{\mathfrak{W}}(x)=w_{n}x_{n}$ for  all $n\geq1$. We call $D_{\mathfrak{W}}$ a diagonal operator on $\ell^{p}$.
In this article, we study the topological conjugate classification of the diagonal operators on $\ell^{p}$. More precisely, we obtained the following results. $D_{\mathfrak{W}}$ and $D_{\vert\mathfrak{W}\vert}$ are topologically conjugate, where $\vert\mathfrak{W}\vert=\{\vert w_{n}\vert\}_{n=1}^{\infty}$. If $\inf_{n}\vert w_n\vert>1$, then $D_{\mathfrak{W}}$ is topologically conjugate to $2\mathbf{I}$, where $\mathbf{I}$ means the identity operator. Similarly, if $\inf_{n}\vert w_n\vert>0$ and $\sup_{n}\vert w_n\vert<1$, then $D_{\mathfrak{W}}$ is topologically conjugate to $\frac{1}{2}\mathbf{I}$. In addition, if $\inf_{n}\vert w_n\vert=1$ and $\inf_{n}\vert t_n\vert>1$, then $D_{\mathfrak{W}}$ and $D_{\mathfrak{T}}$ are not topologically conjugate.
\end{abstract}
\begin{keyword}
Topologically conjugate equivalence, diagonal operators, homeomorphisms on $\ell^{p}$.
\MSC{Primary 37C15, 47B99; Secondary 54C05, 54H20}
\end{keyword}
\end{frontmatter}

\section{Introduction and preliminaries}

Let $X$ be a complete separable metric space and let $f:
X\rightarrow X$ be a continuous function. Then, the pair $(X, f)$ is called a discrete dynamical system.
In the study of dynamical systems, topologically conjugate classification is an important and difficult task.

Two dynamical systems $(X, f)$ and $(Y, g)$ are said  to be topologically conjugate, if there
exists a homeomorphism $h: X\rightarrow Y$ such that $g=h\circ
f\circ h^{-1}$. Moreover, we say that $h$ is a topological conjugacy from $f$ to $g$. Notice
that topologically conjugate relation is an equivalent relation, which preserves almost all dynamical properties such as periodicity,
transitivity, Devaney chaos and so on (refer to \cite{Devaney}).

The study of topologically conjugate classification begin with the issue \cite{S-1963} by S. Smale in the 1960s. So far, it has been achieved some results of classifications on some classes of dynamical systems. R. Adler and R. Palais \cite{AP-1965} studied the topologically conjugate classifications of the Anosov diffeomorphisms on differential manifolds, especially $T^n$. Then, the topologically conjugate classifications of the affine transformations of $T^n$ and more general compact abelian groups were studied by  R. Adler, C. Tresser and P. Worfolk \cite{ATW-1997}, P. Walters \cite{WP-1968, WP-1969}, S. Bhattacharya \cite{BS-2000, BS2-2000}.
J. Robin~\cite{R-1972} studied the topological conjugacy and structural stability for discrete dynamical systems, and then he cooperated with N. Kuiper to investigate the topologically conjugate classifications of the linear endomorphisms on finite dimensional linear spaces in \cite{KR-1973}.
R. Schult \cite{S-1977} proved that the topologically conjugate classification of the linear representations of any compact Lie group is equivalent to their algebraically conjugate classification. X. Pan and B. Hou gave the topologically conjugate classifications of the translation actions on some compact connected  Lie groups \cite{PH-2021, PH-2022}.

In operator theory, similarity is an important research object. Let $\mathcal{X}$ be a separable complex Banach space. Denote by ${B}(\mathcal{X})$ the set of all bounded linear operators from $X$ to itself. Given $A, B\in{B}(\mathcal{X})$. If there exists a bounded invertible operator $T$ such that $TA=BT$, we say that $A$ is similar to $B$. Speaking intuitively, similarity is just the linear conjugate equivalence. In recent years, linear dynamics has become a popular direction in operator theory, which is closely related to the famous invariant subspace problem. For more about linear dynamics, we refer to the books \cite{Bay} and \cite{GEP}. Now we are interested in the topologically conjugate classifications of the bounded linear operators. B.Hou, G. Liao and Y. Cao \cite{HLC12} studied the topologically conjugate classifications of a class of weighted backward shift operators. In the present paper, we focus on the topologically conjugate classifications of diagonal operators.

The class of diagonal operators is an elementary and important class operators in functional analysis and operator theory (see \cite{GMR-2003} for example), which is
crucial in the study of quantum mechanics. Let $\mathcal{H}$ be a separable complex Hilbert space and $D\in{B}(\mathcal{H})$. $D$ is called a diagonal operator, if there exists an orthonormal base $\{e_n\}_{n=1}^{\infty}$ of $\mathcal{H}$ and a bounded sequence of complex numbers $\{w_n\}_{n=1}^{\infty}$ such that
$$
Te_n=w_n e_n \ \ \ \text{for  all} \  n=1,2,\cdots.
$$
That is, the matrix representation of $D$ under the orthonormal base $\{e_n\}_{n=1}^{\infty}$ is an infinite dimensional diagonal matrix.

In the study of eigenvectors and eigenvalues for self-adjoint operators on Hilbert spaces, the diagonal operator is an elementary model and the diagonalization is a useful tool.
In 1909, H. Weyl \cite{HW-1909} proved that a self-adjoint operator in $\mathcal{B(H)}$ is a compact perturbation of a diagonal operator. T. Kato \cite{TK-1957} and M. Rosenblum \cite{MR-1957} conducted research on the perturbation of the continuous spectrum by trace class operators, they showed that if a self-adjoint operator $T$ in $\mathcal{B(H)}$ is not purely singular, then $T$ cannot be diagonalized modulo the trace class. R. Carey and J. Pincus \cite{RC-1976} proved that a purely singular self-adjoint operator in $\mathcal{B(H)}$ is a small trace class perturbation of a diagonal operator. D. Voiculescu \cite{DV-1979} proved that a normal operator in $\mathcal{B(H)}$ is a sum of a diagonal operator and a small Hilbert-Schmidt operator. More recently, Q. Li, J. Shen and R. Shi \cite{QL-2020} provided a generalized version of Voiculescu's theorem for normal operators to semifinite von Neumann algebras. Beyond the self-adjoint operators, B. Ahmadi Kakavandi and E. Nobari \cite{BA-2022} considered the diagonalization of Toeplitz operators. The diagonalization problem of bosonic quadratic Hamiltonians on Fock spaces was studied by P. Nam \cite{NP-2016}.

We could extend the concept of diagonal operator from separable complex Hilbert spaces to
separable complex Banach space with unconditional bases. Let $\mathcal{X}$ be a separable complex Banach space with unconditional bases and $D\in{B}(\mathcal{X})$. $D$ is called a diagonal operator, if there exists an unconditional base $\{e_n\}_{n=1}^{\infty}$ of $\mathcal{X}$ and a bounded sequence of complex numbers $\{w_n\}_{n=1}^{\infty}$ such that
$$
Te_n=w_n e_n \ \ \ \text{for  all} \  n=1,2,\cdots.
$$
In this paper, we aim to study the topologically conjugate classifications of the diagonal operators on the classical Banach space $\ell^{p}$.
Here, $\ell^{p}$ is the space of absolutely $p$-th power summable sequences, i.e.,
\[
\ell^{p}\triangleq\{x=(x_{1},x_{2},\ldots); x_i\in\mathbb{C} \ \text{and} \ \sum\limits_{n=1}^{\infty}\vert x_n\vert^{p}<\infty\}.
\]
We use
$\parallel x\parallel_{p}$ to denote the $\ell^p$-norm of $x\in\ell^p$, and denote by $\pi_{n}$ the natural projection to the $n$-th coordinate, i.e.,
$\pi_{n}(x)=x_{n}$ for $n\geq1$. Without confusion, we also use $0$ to denote the zero vector in $\ell^{p}$.
Let $\mathfrak{W}=\{w_{n}\}_{n=1}^{\infty}$ be a bounded sequence of complex numbers. Define the
operator $D_{\mathfrak{W}}: \ell^{p}\rightarrow\ell^{p}$ by, for any $x=(x_{1},x_{2},\ldots)\in \ell^p$,
\[
\pi_{n}\circ D_{\mathfrak{W}}(x)=w_{n}x_{n}, \ \ \text{for  all} \ n\geq1.
\]

\section{A class of homeomorphisms on $\ell^{p}$}

In this section, we aim to introduce a class of homeomorphisms on $\ell^{p}$, $1\leq p< \infty$, which plays an important role in the classification of diagonal operators.

Let $\mathfrak{S}=\{s_{n}\}_{n=1}^{\infty}$ be a bounded sequence of
positive numbers with $s_{n}\geq1$ for each $n\geq1$. Define a map
$h^{\mathfrak{S}}_{p}$ on $\ell^{p}$  by
\[
\pi_{n}\circ h^{\mathfrak{S}}_{p}(x)=\frac{x_{n}}{\vert x_{n}\vert} \cdot
\sqrt[p]{(\sum_{k=n}^{\infty}\vert x_{k}\vert^{p})^{s_{n}}-(\sum_{k=n+1}^{\infty}\vert x_{k}\vert^{p})^{s_{n}}},
\]
for any $x=(x_{1},x_{2},\ldots)\in \ell^{p}$. We will show that
$h^{\mathfrak{S}}_{p}$ is a homeomorphism from $\ell^{p}$ onto itself as the following Key Lemma.

\vspace{2mm}

\noindent \textbf{Key Lemma.} \
Let $\mathfrak{S}=\{s_{n}\}_{n=1}^{\infty}$ be a bounded sequence of
positive numbers with $s_{n}\geq1$ for each $n\geq1$. Then the map
$h^{\mathfrak{S}}_{p}$ defined as above is a homeomorphism from
$\ell^{p}$ onto itself.

\vspace{2mm}

Let us begin with two elementary results.

\begin{lemma}\label{guji}
Let $r$ be a positive integer. Then, for any $0\leq b \leq a<1$ and
any $1\leq s \leq r$, we have
\[
(1-a)(a^{r}-b^{r})\leq a^{s}-b^{s}\leq \frac{a-b}{1-a}.
\]
\end{lemma}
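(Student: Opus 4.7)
The plan is to split the double inequality into its two halves and to handle each via a mean value theorem followed by a one-variable maximization.

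For the upper bound $a^s - b^s \le (a-b)/(1-a)$, assume $b < a$ (the case $b = a$ being trivial). By the mean value theorem applied to $f(t) = t^s$ on $[b, a]$, there exists $\xi \in (b, a)$ with $a^s - b^s = s\xi^{s-1}(a-b)$. Since $s \ge 1$ and $\xi \le a < 1$, monotonicity of $t \mapsto t^{s-1}$ on $[0,\infty)$ gives $\xi^{s-1} \le a^{s-1}$, so $a^s - b^s \le s a^{s-1}(a-b)$. It then suffices to show that $\varphi(a) = s a^{s-1}(1-a) \le 1$ on $[0,1)$. A short calculation shows $\varphi$ attains its maximum at $a = (s-1)/s$ with value $(1 - 1/s)^{s-1}$, which is at most $1$ since the base lies in $[0,1]$ and the exponent is nonnegative.

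For the lower bound $(1-a)(a^r - b^r) \le a^s - b^s$, the key step is to apply Cauchy's mean value theorem to the pair $f(t) = t^s$, $g(t) = t^r$ on $[b, a]$, so that numerator and denominator share a common mean-value point:
\[
\frac{a^s - b^s}{a^r - b^r} = \frac{s\xi^{s-1}}{r\xi^{r-1}} = \frac{s}{r}\,\xi^{s-r}.
\]
Since $s - r \le 0$ and $\xi \le a$, we have $\xi^{s-r} \ge a^{s-r}$, hence $(a^s - b^s)/(a^r - b^r) \ge s/(r a^{r-s})$. It remains to prove $r(1-a)a^{r-s} \le s$. Writing $k = r - s \ge 0$, one computes that $(1-a)a^k$ is maximized at $a = k/(k+1)$ with value $k^k/(k+1)^{k+1} \le 1/(k+1)$, so $r(1-a)a^k \le r/(k+1)$; and the inequality $r/(k+1) \le r - k = s$ rearranges to $k(s-1) \ge 0$, which holds by the hypothesis $s \ge 1$.

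The main obstacle is the lower bound: applying the ordinary MVT separately to $a^s - b^s$ and $a^r - b^r$ produces two unrelated mean-value points and yields a bound too weak to close the argument. Using Cauchy's MVT enforces a common $\xi$ and reduces the problem to a single-variable maximization, after which the hypothesis $s \ge 1$ exactly seals the inequality. The integer hypothesis on $r$ is not used by this argument; it simply allows the lemma to be applied with $r$ any integer upper bound for the sequence $\{s_n\}$ in the Key Lemma.
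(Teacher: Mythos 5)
Your proof is correct, and it takes a genuinely different route from the paper's. The paper reduces to rational $s=m/n$ by a continuity/density argument, substitutes $u=b^{1/n}$, $v=a^{1/n}$, and compares ratios of finite geometric sums $\frac{1+u+\cdots+u^{j-1}}{1+v+\cdots+v^{j-1}}$ (which decrease in $j$) to first establish $\frac{1-b^{r}}{1-a^{r}}\leq\frac{1-b^{s}}{1-a^{s}}\leq\frac{1-b}{1-a}$, and then rearranges; this algebraic argument genuinely needs $r$ to be an integer and $s$ rational so that the geometric sums are finite. You instead use the mean value theorem for the upper bound and Cauchy's mean value theorem for the lower bound, each followed by a one-variable maximization ($s a^{s-1}(1-a)\leq 1$ and $r(1-a)a^{r-s}\leq s$, the latter closing via $k(s-1)\geq 0$). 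I checked both optimizations and the edge cases $s=1$, $s=r$, $b=a$, and $a=0$; everything holds, and your observation that the integrality of $r$ is never used is accurate — your argument proves the lemma for arbitrary real $1\leq s\leq r$, which is slightly more general than the paper's (though the paper only ever applies it with $r$ an integer upper bound for $\mathfrak{S}$). The trade-off is that the paper's proof is calculus-free, while yours avoids the rational-approximation step and isolates the key point (a common mean-value point for numerator and denominator) more transparently.
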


\begin{proof}
If $a=0$, the desired inequalities is obvious. Now suppose $a>0$. By
continuity argument, it suffices to show that the conclusion holds when $s$ is a
rational number. Let $s=\frac{m}{n}$, where $m$ and $n$ are two coprime positive integers. Denote
$u=b^{\frac{1}{n}}$ and $v=a^{\frac{1}{n}}$. Since $0\leq b \leq a<1$, we have
\[
1\geq\frac{u}{v}\geq(\frac{u}{v})^{2}\geq\cdots\geq(\frac{u}{v})^{rn-1}.
\]
Consequently,
\[
\frac{1+u+u^{2}+\cdots+u^{n-1}}{1+v+v^{2}+\cdots+v^{n-1}}
\geq \frac{1+u+u^{2}+\cdots+u^{m-1}}{1+v+v^{2}+\cdots+v^{m-1}}\geq
\frac{1+u+u^{2}+\cdots+u^{rn-1}}{1+v+v^{2}+\cdots+v^{rn-1}}.
\]
After multiplying $\frac{1-u}{1-v}$, we obtain that
\begin{equation}\label{shen}
\frac{1-b^{r}}{1-a^{r}} \leq \frac{1-b^{s}}{1-a^{s}} \leq
\frac{1-b}{1-a}.
\end{equation}

Notice that $0<a \leq a^{s} \leq a^{r}$ and
\[
a^{s}-b^{s}=(\frac{1-b^{s}}{1-a^{s}}-1)(1-a^{s}).
\]
Then, it follows from the inequalities (\ref{shen}) that
\[
(1-a)(a^{r}-b^{r})\leq a^{s}-b^{s}\leq \frac{a-b}{1-a}.
\]
\end{proof}

\begin{lemma}\label{yasuo}
Let $a>0$ and $s\geq1$. Let $f:[0,+\infty)\rightarrow[0,+\infty)$ be
the continuous map defined by, for any $x\in[0,+\infty)$,
\[
f(x)=\sqrt[s]{a+x^{s}}.
\]
Then $f$ is a contraction, i.e.,
$\vert f(x)-f(y)\vert \leq \vert x-y\vert$, for any $x,y\geq0$.
\end{lemma}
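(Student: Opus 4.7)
The plan is to reduce the contraction bound to the one-sided inequality $f(x)-f(y)\leq x-y$ for $x\geq y\geq 0$, and then to establish that bound by showing $f'(t)\leq 1$ on $(0,\infty)$. Since $t\mapsto a+t^{s}$ and $r\mapsto r^{1/s}$ are both non-decreasing, $f$ itself is non-decreasing on $[0,\infty)$; hence $f(x)-f(y)\geq 0$ already, and $\vert f(x)-f(y)\vert\leq\vert x-y\vert$ will follow from the one-sided bound.

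To obtain the one-sided bound I would differentiate:
\[
f'(t)=\frac{d}{dt}(a+t^{s})^{1/s}=\frac{t^{s-1}}{(a+t^{s})^{(s-1)/s}}\qquad(t>0).
\]
Since $s\geq 1$, the exponent $(s-1)/s$ is non-negative, so the map $r\mapsto r^{(s-1)/s}$ is non-decreasing on $[0,\infty)$. Combined with $t^{s}\leq a+t^{s}$ this yields $t^{s-1}=(t^{s})^{(s-1)/s}\leq (a+t^{s})^{(s-1)/s}$, and hence $f'(t)\leq 1$. An application of the mean value theorem on $(y,x)$ then gives $f(x)-f(y)\leq x-y$ for $x>y>0$, and the case $y=0$ follows by continuity of $f$ on $[0,+\infty)$.

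The argument is essentially a one-line calculus estimate, so I do not expect any genuine obstacle; the only step that really uses $s\geq 1$ is the monotonicity of $r\mapsto r^{(s-1)/s}$ invoked above, which would fail for $s<1$. If one prefers an argument that avoids handling $t=0$ separately (where $f'(0)=0$ for $s>1$ but the quotient formula for $f'$ is only valid for $t>0$), one can instead set $d=x-y$ and $u=f(y)\geq y$ and verify the equivalent inequality $(u+d)^{s}\geq a+(y+d)^{s}$ directly: for $s\geq 1$ the map $t\mapsto(t+d)^{s}-t^{s}$ has non-negative derivative $s((t+d)^{s-1}-t^{s-1})$ and is therefore non-decreasing on $[0,+\infty)$, so its value at $t=u\geq y$ is at least its value at $t=y$, which is exactly $u^{s}-y^{s}=a$.
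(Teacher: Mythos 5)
Your argument is correct and is essentially the paper's own proof: both compute $f'(t)=t^{s-1}(a+t^{s})^{1/s-1}=\bigl(t/(a+t^{s})^{1/s}\bigr)^{s-1}\leq 1$ for $t>0$ (using $s\geq 1$) and conclude via the mean value theorem. Your derivative-free alternative at the end is a nice bonus but not needed.
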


\begin{proof}
Since $a>0$ and $s\geq1$, one can see that
\[
f'(x)=x^{s-1}(a+x^{s})^{{1}/{s}-1}=\left(\frac{x}{(a+x^s)^{{1}/{s}}}  \right)^{s-1} \leq1, \ \ for \ every \ x>0 .
\]
Then $f$ is a contraction, i.e.,
$\vert f(x)-f(y)\vert \leq \vert x-y\vert$, for any $x,y\geq0$.
\end{proof}

Now, let us study the map $h^{\mathfrak{S}}_{p}$.

\begin{lemma}\label{welldefined}
$h^{\mathfrak{S}}_{p}$ is a well-defined map from
$\ell^{p}$ to itself.
\end{lemma}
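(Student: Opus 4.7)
The plan is to verify the two requirements hidden in the statement: first, that each coordinate of $h^{\mathfrak{S}}_p(x)$ is unambiguously defined (so that the formula gives a bona fide complex sequence), and second, that this sequence actually lies in $\ell^{p}$.

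For the pointwise definition, the only apparent issue is the factor $x_n/|x_n|$, which is not literally defined when $x_n=0$. However, $x_n=0$ forces $\sum_{k=n}^{\infty}|x_k|^p=\sum_{k=n+1}^{\infty}|x_k|^p$, so the radicand vanishes; adopting the natural convention $\pi_n\circ h^{\mathfrak{S}}_p(x)=0$ in this case makes each coordinate a well-defined complex number. To establish $\ell^{p}$-membership, I would abbreviate $T_n=\sum_{k=n}^{\infty}|x_k|^p$, which is non-negative, non-increasing, and satisfies $T_n\to 0$ together with $T_n-T_{n+1}=|x_n|^p$. The goal then becomes
\[
\sum_{n=1}^{\infty}\bigl(T_n^{s_n}-T_{n+1}^{s_n}\bigr)<\infty.
\]

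The key step is a head/tail split at an index $N$ chosen so that $T_N\leq 1/2$, which exists since $T_n\to 0$. Since $\mathfrak{S}$ is bounded, I can fix a positive integer $r$ with $s_n\leq r$ for all $n$. For $n\geq N$ both $T_n$ and $T_{n+1}$ lie in $[0,1/2]$, so Lemma~\ref{guji} (with $a=T_n$, $b=T_{n+1}$, $s=s_n$) applies and gives
\[
T_n^{s_n}-T_{n+1}^{s_n}\leq \frac{T_n-T_{n+1}}{1-T_n}\leq 2\,|x_n|^p,
\]
whence the tail is bounded by $2\|x\|_p^p$. The head is a finite sum, each of whose terms is crudely bounded by $T_n^{s_n}\leq \max(1,\|x\|_p^{p\sup_k s_k})$, hence also finite. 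Adding the two contributions yields $h^{\mathfrak{S}}_p(x)\in\ell^p$.

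The main obstacle I anticipate is the restriction $a<1$ in Lemma~\ref{guji}: the estimate $(a-b)/(1-a)$ blows up as $a\to 1$, and $T_n$ can genuinely exceed $1$ when $\|x\|_p^p\geq 1$. The head/tail decomposition is designed precisely to bypass this, since only finitely many $T_n$ can be $\geq 1/2$, so the problematic indices contribute a finite (if inelegant) amount and Lemma~\ref{guji} is invoked only where $1-T_n\geq 1/2$, making the upper bound comparable to $|x_n|^p$ and hence summable.
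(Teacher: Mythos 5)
Your proof is correct, but it takes a genuinely different route from the paper's. The paper splits into two cases according to whether $\parallel x\parallel_{p}^{p}\leq\frac{1}{2}$: in the small-norm case it applies Lemma~\ref{guji} to every coordinate at once and, crucially, keeps the \emph{lower} bound from that lemma as well, obtaining the two-sided estimate $2^{-1/p}\parallel x\parallel_{p}^{r}\leq\parallel h^{\mathfrak{S}}_{p}(x)\parallel_{p}\leq 2^{1/p}\parallel x\parallel_{p}$; in the large-norm case it exploits the homogeneity $\pi_{n}\circ h^{\mathfrak{S}}_{p}(tx)=t^{s_{n}}\cdot\pi_{n}\circ h^{\mathfrak{S}}_{p}(x)$ to rescale into the first case, yielding $2^{-r/p}\parallel x\parallel_{p}\leq\parallel h^{\mathfrak{S}}_{p}(x)\parallel_{p}\leq 2^{r/p}\parallel x\parallel_{p}^{r}$. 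You instead split the index set at the first $N$ with $T_{N}\leq\frac{1}{2}$, control the tail by Lemma~\ref{guji} exactly as the paper does, and dispose of the finite head by the crude bound $T_{n}^{s_{n}}-T_{n+1}^{s_{n}}\leq T_{n}^{s_{n}}$. This is simpler and fully adequate for the lemma as stated, and you also handle the $x_{n}=0$ degeneracy in the factor $x_{n}/\vert x_{n}\vert$ explicitly, which the paper leaves implicit. What your argument does not deliver is the quantitative two-sided norm inequalities (\ref{guji1}) and (\ref{guji2}), which the paper extracts from this proof and reuses verbatim in Lemma~\ref{continuous} and Lemma~\ref{CtoC} (the lower bound, in particular, is what makes the uniform tail control of the $x^{(m)}$ possible there); so if one adopted your proof, those estimates would have to be established separately before the later lemmas could go through.
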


\begin{proof}
It suffices to show that $h^{\mathfrak{S}}_{p}(x)$ is actually an element in $\ell^{p}$ for any $x\in\ell^{p}$. In fact, we will give both the upper and the lower estimations for the norm of $h^{\mathfrak{S}}_{p}(x)$. Let
$r\in\mathbb{N}$ be  an upper bound of the sequence $\mathfrak{S}$. Given any $x=(x_{1},x_{2},\ldots)\in
\ell^{p}$. Let $y=h^{\mathfrak{S}}_{p}(x)=(y_{1},y_{2},\ldots)$.

\noindent \textbf{Case $1$.} Suppose that $\parallel
x\parallel_{p}^{p}\leq \frac{1}{2}$. For each $n\in\mathbb{N}$,
\[
\vert y_{n}\vert^{p}=(\sum_{k=n}^{\infty}\vert x_{k}\vert^{p})^{s_{n}}-(\sum_{k=n+1}^{\infty}\vert x_{k}\vert^{p})^{s_{n}}.
\]
By Lemma \ref{guji}, we have
\[
\frac{1}{2}\left((\sum_{k=n}^{\infty}\vert x_{k}\vert^{p})^{r}-(\sum_{k=n+1}^{\infty}\vert x_{k}\vert^{p})^{r}\right)
\leq \vert y_{n}\vert^{p} \leq 2\vert x_{n}\vert^{p}.
\]
Then,
\begin{equation}\label{guji1}
2^{-{1}/{p}}\parallel x\parallel_{p}^{r}\leq\parallel
h^{\mathfrak{S}}_{p}(x)\parallel_{p}\leq 2^{{1}/{p}}\parallel
x\parallel_{p}.
\end{equation}

\noindent \textbf{Case $2$.}  Suppose that $\parallel
x\parallel_{p}^{p}> \frac{1}{2}$. For any positive number $t$ no less than
$1$, it is obvious that $t\leq t^{s_{n}}\leq t^{r}$. Notice that for any $n\in\mathbb{N}$,
\[
\pi_{n}\circ h^{\mathfrak{S}}_{p}(tx)=t^{s_{n}}\cdot\pi_{n}\circ h^{\mathfrak{S}}_{p}(x).
\]
Since $2^{{1}/{p}}\parallel x\parallel_{p}>1$ in the present case, we have
\[
2^{{1}/{p}}\parallel x\parallel_{p}\cdot\parallel h^{\mathfrak{S}}_{p}(\frac{x}{2^{{1}/{p}}\parallel x\parallel_{p}})\parallel_{p}
\leq \parallel h^{\mathfrak{S}}_{p}(x)\parallel_{p} \leq
(2^{{1}/{p}}\parallel x\parallel_{p})^{r}\cdot\parallel
h^{\mathfrak{S}}_{p}(\frac{x}{2^{{1}/{p}}\parallel
x\parallel_{p}})\parallel_{p}.
\]
Then, following from the inequalities (\ref{guji1}), we obtain that
\begin{equation}\label{guji2}
2^{-{r}/{p}}\parallel x\parallel_{p} \leq \parallel
h^{\mathfrak{S}}_{p}(x)\parallel_{p} \leq 2^{{r}/{p}}\parallel
x\parallel_{p}^{r}.
\end{equation}

Therefore, $h^{\mathfrak{S}}_{p}$ is a
map from $\ell^{p}$ to $\ell^{p}$. The proof is finished.
\end{proof}

\begin{lemma}\label{continuous}
$h^{\mathfrak{S}}_{p}: \ell^{p}\rightarrow\ell^{p}$ is a continuous map.
\end{lemma}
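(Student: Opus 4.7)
My plan is to establish continuity at an arbitrary $x \in \ell^p$ by combining coordinatewise continuity with a uniform tail estimate, which is the standard strategy for showing continuity of a nonlinear map into a sequence space.

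For coordinatewise continuity of $\pi_n \circ h^{\mathfrak{S}}_p$, I would first observe that the modulus of the $n$-th coordinate,
\[
\Phi_n(x) := \Bigl[\bigl(\sum_{k=n}^{\infty}|x_k|^p\bigr)^{s_n} - \bigl(\sum_{k=n+1}^{\infty}|x_k|^p\bigr)^{s_n}\Bigr]^{1/p},
\]
is plainly a continuous function of $x \in \ell^p$, since both tail $p$-sums are continuous on $\ell^p$ and composing with $t\mapsto t^{s_n}$ and $t\mapsto t^{1/p}$ preserves continuity. Along a sequence $x^{(m)} \to x$, if $x_n \neq 0$ then $x^{(m)}_n \neq 0$ for all large $m$ and the phase $x^{(m)}_n/|x^{(m)}_n|$ varies continuously, so $\pi_n \circ h^{\mathfrak{S}}_p(x^{(m)}) \to \pi_n \circ h^{\mathfrak{S}}_p(x)$. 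If $x_n = 0$, the two tail sums at $x$ coincide and $\Phi_n(x) = 0$; regardless of any phase oscillations, $|\pi_n \circ h^{\mathfrak{S}}_p(x^{(m)})| = \Phi_n(x^{(m)}) \to 0$. Thus each coordinate map is continuous.

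Next I would set up a uniform tail estimate. The decisive observation is that the $n$-th coordinate of $h^{\mathfrak{S}}_p(x)$ depends only on $(x_k)_{k \geq n}$. So, letting $Q_N x$ denote the truncation obtained from $x$ by zeroing out the first $N$ entries, one has $\pi_n \circ h^{\mathfrak{S}}_p(x) = \pi_n \circ h^{\mathfrak{S}}_p(Q_N x)$ for every $n > N$. Consequently,
\[
\sum_{n>N} |\pi_n \circ h^{\mathfrak{S}}_p(x)|^p \leq \|h^{\mathfrak{S}}_p(Q_N x)\|_p^p,
\]
and the right-hand side is controlled by the norm bounds of Lemma \ref{welldefined} applied to $Q_N x$. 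Given $x^{(m)} \to x$, the quantity $\|Q_N x^{(m)}\|_p$ can be made arbitrarily small uniformly in large $m$ by choosing $N$ large, so Lemma \ref{welldefined} forces the tails of $h^{\mathfrak{S}}_p(x^{(m)})$ to be uniformly small as well.

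Combining the two steps is routine: for $\epsilon>0$, first choose $N$ so that $\sum_{n>N}|\pi_n \circ h^{\mathfrak{S}}_p(x^{(m)}) - \pi_n \circ h^{\mathfrak{S}}_p(x)|^p<\epsilon^p/2$ uniformly in large $m$, then use coordinatewise continuity to make each of the first $N$ coordinates of $h^{\mathfrak{S}}_p(x^{(m)}) - h^{\mathfrak{S}}_p(x)$ so small that their total contribution is below $\epsilon^p/2$. I expect the main obstacle to be ensuring that uniform tail control holds even when $x$ has large norm, where the bound in Lemma \ref{welldefined} is only the weaker Case~2 estimate $\|h^{\mathfrak{S}}_p(y)\|_p \leq 2^{r/p}\|y\|_p^r$; this is resolved by the remark that only the truncated tails $Q_N x^{(m)}$ enter, and those automatically fall into the small-norm regime (Case~1) once $N$ is sufficiently large, giving the clean linear bound $\|h^{\mathfrak{S}}_p(Q_N x)\|_p\leq 2^{1/p}\|Q_N x\|_p$.
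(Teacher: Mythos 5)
Your proposal is correct and follows essentially the same route as the paper: coordinatewise convergence of the images combined with a uniform tail estimate coming from the fact that the $n$-th coordinate of $h^{\mathfrak{S}}_{p}(x)$ depends only on $(x_k)_{k\geq n}$, so the tails fall into the small-norm regime where Lemma~\ref{welldefined} gives the linear bound. Your treatment of the phase factor and of the case $x_n=0$ just makes explicit what the paper dismisses as "obvious" coordinatewise convergence.
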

\begin{proof}
Let $\{x^{(m)}\}_{m=1}^{\infty}$ be a Cauchy sequence in $\ell^{p}$ and
let $y^{(m)}=h^{\mathfrak{S}}_{p}{(x^{(m)})}$ for each $m\geq1$.
Write $x^{(m)}=(x^{(m)}_{1},x^{(m)}_{2},\ldots)$ and
$y^{(m)}=(y^{(m)}_{1},y^{(m)}_{2},\ldots)$, for each $m\geq1$. By
the construction of $h^{\mathfrak{S}}_{p}$, it is obvious that
${\{h^{\mathfrak{S}}_{p}{(x^{(m)})}\}}_{m=1}^{\infty}$ converges by
coordinates, i.e., $\{y_{n}^{(m)}\}_{m=1}^{\infty}$ is a Cauchy
sequence for each $n\in\mathbb{N}$. For any
$0<\epsilon\leq\frac{1}{2}$, there is a positive integer $N_{0}$
such that
\[
\sum\limits_{n=N_{0}}^{\infty}{\vert x^{(m)}_{n}\vert^{p}}<2^{-{1}/{p}}\cdot\epsilon,
\ \  \text{for each} \ m\geq1.
\]
According to the right inequality of
(\ref{guji1}), we have
\[
\sum\limits_{n=N_{0}}^{\infty}{\vert y^{(m)}_{n}\vert^{p}}<\epsilon,  \ \ \text{for  each} \ m\geq1.
\]
Therefore, $h^{\mathfrak{S}}_{p}$ is continuous.
\end{proof}

\begin{lemma}\label{CtoC}
Let $\{x^{(m)}\}_{m=1}^{\infty}$ be a
sequence of vectors in $\ell^{p}$. Denote
$y^{(m)}=h^{\mathfrak{S}}_{p}(x^{(m)})$ for every $m\in\mathbb{N}$. If $\{y^{(m)}\}_{m=1}^{\infty}$ is a Cauchy
sequence, then so is the sequence $\{x^{(m)}\}_{m=1}^{\infty}$.
\end{lemma}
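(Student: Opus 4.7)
The plan is to deduce from the Cauchy property of $\{y^{(m)}\}$ three features of $\{x^{(m)}\}$: uniform boundedness $\|x^{(m)}\|_p \leq M$, uniform smallness of the tails $\sigma_N^{(m)} := \sum_{k \geq N} |x_k^{(m)}|^p$ as $N \to \infty$, and coordinate-wise Cauchy-ness of $\{x_n^{(m)}\}_m$ for each fixed $n$. Together these three imply Cauchy-ness of $\{x^{(m)}\}$ in $\ell^p$ via a standard head-tail split.

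Boundedness and tail smallness follow from the estimates already used in Lemma \ref{welldefined}, read in the reverse direction. The lower bounds in (\ref{guji1}) and (\ref{guji2}) give $\|x^{(m)}\|_p \leq \max\bigl((2^{1/p} B)^{1/r},\ 2^{r/p} B\bigr)$ where $B$ is a bound for $\|y^{(m)}\|_p$. For the tail estimate, first assume $\|x^{(m)}\|_p^p \leq 1/2$: Lemma \ref{guji} applied with $a = \sigma_n^{(m)}$, $b = \sigma_{n+1}^{(m)}$, $s = s_n$ yields $|y_n^{(m)}|^p \geq \tfrac12\bigl((\sigma_n^{(m)})^r - (\sigma_{n+1}^{(m)})^r\bigr)$, and summing from $n = N$ telescopes to $(\sigma_N^{(m)})^r \leq 2\sum_{k \geq N}|y_k^{(m)}|^p$. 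A scaling argument using the homogeneity $\pi_n \circ h^{\mathfrak{S}}_p(tx) = t^{s_n}\,\pi_n \circ h^{\mathfrak{S}}_p(x)$ for $t \geq 1$ extends the estimate to arbitrary $m$ with uniform constants, so the standard uniform tail smallness of the $\ell^p$-Cauchy sequence $\{y^{(m)}\}$ transfers to $\{\sigma_N^{(m)}\}_m$.

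The coordinate-wise Cauchy property is the main obstacle, and Lemma \ref{yasuo} is essential here. Inverting the defining identity gives the backward recursion $\sigma_n^{(m)} = \bigl(|y_n^{(m)}|^p + (\sigma_{n+1}^{(m)})^{s_n}\bigr)^{1/s_n}$, where each map $\xi \mapsto (|y_n^{(m)}|^p + \xi^{s_n})^{1/s_n}$ is a contraction by Lemma \ref{yasuo}. For each $N$, I define truncated values $\sigma_n^{(m),N}$ ($n \leq N$) by the same recursion but initialized at $\sigma_{N+1}^{(m),N} = 0$. Iterating the contraction bound yields $|\sigma_n^{(m)} - \sigma_n^{(m),N}| \leq \sigma_{N+1}^{(m)}$ for all $n \leq N$ and all $m$. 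Since for fixed $n$ and $N$ the value $\sigma_n^{(m),N}$ is a continuous function of the finitely many coordinates $y_n^{(m)}, \ldots, y_N^{(m)}$, each of which is Cauchy in $m$, a triangle inequality combined with the uniform smallness of $\sigma_{N+1}^{(m)}$ established above shows $\{\sigma_n^{(m)}\}_m$ is Cauchy for every $n$. Hence $\{|x_n^{(m)}|^p\}_m = \{\sigma_n^{(m)} - \sigma_{n+1}^{(m)}\}_m$ is Cauchy, and since the phase of $x_n^{(m)}$ coincides with that of $y_n^{(m)}$ by construction (with $x_n^{(m)} \to 0$ when $y_n^{(m)} \to 0$), $\{x_n^{(m)}\}_m$ is itself Cauchy. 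The crux is this truncation step: without the contraction bound from Lemma \ref{yasuo}, one could not convert uniform tail smallness into a uniform finite-coordinate approximation of $\sigma_n^{(m)}$, and coordinate-wise convergence of $\{y^{(m)}\}$ alone would be insufficient to recover $\{x^{(m)}\}$.
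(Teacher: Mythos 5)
Your argument is correct and follows essentially the same route as the paper's: both derive uniform smallness of the tails $\sigma_N^{(m)}=\sum_{k\ge N}|x^{(m)}_k|^p$ from the lower estimate in (\ref{guji1}), and both recover the tail sums $\sigma_n^{(m)}$ coordinate by coordinate from the $y$-data by exploiting the contraction of Lemma \ref{yasuo} along the backward recursion $\sigma_n^{(m)}=(|y_n^{(m)}|^p+(\sigma_{n+1}^{(m)})^{s_n})^{1/s_n}$, with the tail controlling the error. The only organizational difference is that you truncate the recursion at level $N$ and invoke continuity in finitely many $y$-coordinates, whereas the paper iterates the explicit inequality (\ref{trans}) to show $\parallel x^{(m)}\parallel_p^p$ is Cauchy and then peels off coordinates forward; the substance is the same.
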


\begin{proof}
For any $0<\epsilon\leq\frac{1}{2}$, there is a positive integer
$N_{0}$ such that
\[
\sum\limits_{n=N_{0}}^{\infty}{\vert y^{(m)}_{n}\vert^{p}}<2^{-{1}/{p}}\cdot\epsilon^{r},
 \ \ \text{for  each} \ m\geq1.
\]
According to the left inequality of
(\ref{guji1}), we have
\[
\sum\limits_{n=N_{0}}^{\infty}{\vert x^{(m)}_{n}\vert^{p}}<\epsilon,  \ \ \text{for each} \ m\geq1.
\]
Then, to prove that $\{x^{(m)}\}_{m=1}^{\infty}$ is a Cauchy sequence, it suffices to prove $\{x^{(m)}\}_{m=1}^{\infty}$ converges by
coordinates. In fact,  we should only prove that $\parallel
x^{(m)}\parallel_{p}^{p}$ converges  as $m\rightarrow\infty$. Suppose that $\parallel
x^{(m)}\parallel_{p}^{p}$ converges as $m\rightarrow\infty$. Together with the convergence of $\{y^{(m)}_1\}_{m=1}^{\infty}$,
\[
(\sum\limits_{k=2}^{\infty}{\vert x^{(m)}_{k}\vert^{p}})^{s_1}=\parallel
x^{(m)}\parallel_{p}^{ps_1}-\vert y^{(m)}_1\vert^{p}
\]
also converges and consequently $\vert x^{(m)}_1\vert$ converges. Furthermore, by the convergence of $\{\sum\limits_{k=2}^{\infty}{\vert x^{(m)}_{k}\vert^{p}}\}_{m=1}^{\infty}$ and the convergence of $\{y^{(m)}_2\}_{m=1}^{\infty}$,
\[
(\sum\limits_{k=3}^{\infty}{\vert x^{(m)}_{k}\vert^{p}})^{s_2}=(\sum\limits_{k=2}^{\infty}{\vert x^{(m)}_{k}\vert^{p}})^{s_2}-\vert y^{(m)}_2\vert^{p}
\]
also converges and consequently $\vert x^{(m)}_2\vert$ converges. Following this way, we could obtain that for each $n\in\mathbb{N}$, $\vert x^{(m)}_n\vert$ converges as $m\rightarrow\infty$.
Notice that $\frac{x^{(m)}_{n}}{\vert x^{(m)}_{n}\vert}=\frac{y^{(m)}_{n}}{\vert y^{(m)}_{n}\vert}$, for every $m,n\geq1$. Then, for each $n\in\mathbb{N}$, $x^{(m)}_n$ converges as $m\rightarrow\infty$.

Now, let us show  that $\{\parallel x^{(m)}\parallel_{p}^{p}\}_{m=1}^{\infty}$ is a Cauchy sequence.
For any $\delta>0$, we have known that there is $N_{1}\in\mathbb{N}$ such that
\[
\sum\limits_{n=N_{1}+1}^{\infty}{\vert x^{(m)}_{n}\vert ^{p}}<\frac{\delta}{3},
\ \text{for  each} \ m\geq1.
\]
Since $\{y^{(m)}\}_{m=1}^{\infty}$ is a
Cauchy sequence and $1\leq s_{n}\leq r$ for all $n\in\mathbb{N}$, there
exists $M\in\mathbb{N}$ such that, for any $m_{1},m_{2}>M$,
\[
\sum\limits_{n=1}^{N_{1}}{\left\vert \vert(y^{(m_{1})}_{n})\vert^{{p}/{s_{n}}}-
\vert(y^{(m_{2})}_{n})\vert^{{p}/{s_{n}}}\right\vert}<\frac{\delta}{3}.
\]

Following from the construction of $h^{\mathfrak{S}}_{p}$, for any
$m\in\mathbb{N}$ and any $n\in\mathbb{N}$, we have
\[
\sum\limits_{k=n}^{\infty}{\vert x^{(m)}_{k}\vert^{p}}=
\sqrt[s_{n}]{\vert y^{(m)}_{n}\vert^{p}+
(\sum\limits_{k=n+1}^{\infty}{\vert x^{(m)}_{k}\vert^{p}})^{s_n}}.
\]

Then by Lemma \ref{yasuo}, for any $M_{1},M_{2}\in\mathbb{N}$ and
any $n\in\mathbb{N}$ ,
\begin{align}\label{trans}
&\left\vert \sum\limits_{k=n}^{\infty}{\vert x^{(M_{1})}_{k}\vert^{p}}
-\sum\limits_{k=n}^{\infty}{\vert x^{(M_{2})}_{k}\vert^{p}}\right\vert \nonumber \\
\leq&\left\vert \sqrt[s_{n}]{(\vert y^{(M_1)}_{n}\vert^{{p}/{s_n}})^{s_n}+(\sum\limits_{k=n+1}^{\infty}{\vert x^{(M_1)}_{k}\vert^{p}})^{s_n}}
- \sqrt[s_{n}]{(\vert y^{(M_2)}_{n}\vert^{{p}/{s_n}})^{s_n}+(\sum\limits_{k=n+1}^{\infty}{\vert x^{(M_1)}_{k}\vert^{p}})^{s_n}}\right\vert \nonumber \\
&  +\left\vert  \sqrt[s_{n}]{\vert y^{(M_2)}_{n}\vert^{p}+(\sum\limits_{k=n+1}^{\infty}{\vert x^{(M_1)}_{k}\vert^{p}})^{s_n}}
-\sqrt[s_{n}]{\vert y^{(M_2)}_{n}\vert^{p}+(\sum\limits_{k=n+1}^{\infty}{\vert x^{(M_2)}_{k}\vert^{p}})^{s_n}}\right\vert \nonumber \\
\leq& \left\vert \vert (y^{(M_{1})}_{n})\vert^{{p}/{s_{n}}}-
\vert (y^{(M_{2})}_{n})\vert^{{p}/{s_{n}}}\right\vert +\left\vert \sum\limits_{k=n+1}^{\infty}{\vert x^{(M_{1})}_{k}\vert^{p}}
-\sum\limits_{k=n+1}^{\infty}{\vert x^{(M_{2})}_{k}\vert^{p}}\right\vert.
\end{align}

Therefore, for any $m_{1},m_{2}>M$,
\begin{align*}
&\left\vert \parallel x^{(m_{1})}\parallel_{p}^{p}-\parallel x^{(m_{2})}\parallel_{p}^{p}\right\vert  \\
=& \left\vert \sum\limits_{k=1}^{\infty}{\vert x^{(m_{1})}_{k}\vert^{p}} -\sum\limits_{k=1}^{\infty}{\vert x^{(m_{2})}_{k}\vert^{p}}\right\vert \\
\leq& \left\vert\vert(y^{(m_{1})}_{1})\vert^{{p}/{s_1}}-
\vert(y^{(m_{2})}_{1})\vert^{{p}/{s_1}}\right\vert+\left\vert\sum\limits_{k=2}^{\infty}{\vert x^{(m_{1})}_{k}\vert^{p}}-\sum\limits_{k=2}^{\infty}{\vert x^{(m_{2})}_{k}\vert^{p}}\right\vert \\
& \ \ \cdots \cdots \ \ (\text{Applying the inequality} \ (\ref{trans}) \ N_1-1 \ \text{times}) \\
\leq& \sum\limits_{n=1}^{N_{1}}\left\vert\vert(y^{(m_{1})}_{n})\vert^{{p}/{s_n}}-
\vert(y^{(m_{2})}_{n})\vert^{{p}/{s_n}}\right\vert+\left\vert\sum\limits_{k=N_{1}+1}^{\infty}{\vert x^{(m_{1})}_{k}\vert^{p}}-\sum\limits_{k=N_{1}+1}^{\infty}{\vert x^{(m_{2})}_{k}\vert^{p}}\right\vert \\
\leq& \sum\limits_{n=1}^{N_{1}}\left\vert\vert(y^{(m_{1})}_{n})\vert^{{p}/{s_n}}-\vert(y^{(m_{2})}_{n})\vert^{{p}/{s_n}}\right\vert+\sum\limits_{k=N_{1}+1}^{\infty}{\vert x^{(m_{1})}_{k}\vert^{p}}+\sum\limits_{k=N_{1}+1}^{\infty}{\vert x^{(m_{2})}_{k}\vert^{p}} \\
<& \frac{\delta}{3}+\frac{\delta}{3}+\frac{\delta}{3} \\
=& \delta.
\end{align*}
So $\{\parallel x^{(m)}\parallel_{p}^{p}\}_{m=1}^{\infty}$ is a Cauchy sequence. This completes the proof.
\end{proof}

Based on the above preliminaries, we could prove the Key Lemma.

\begin{proof}[Proof of Key Lemma]
By Lemma \ref{welldefined} and Lemma \ref{continuous}, $h^{\mathfrak{S}}_{p}$ is a well-defined continuous map
from $\ell^{p}$ to itself.

Now we aim to show that $h^{\mathfrak{S}}_{p}$ is a bijection.
Let $E$ be the set of points in $\ell^{p}$ with
finite coordinates being nonzero. It is easy to see that $h^{\mathfrak{S}}_{p}(E)=E$. For any $y\in\ell^{p}$, there is a sequence of vectors $\{y^{(m)}\}_{m=1}^{\infty}$ in $E$ such that $y^{(m)}$ converges to $y$. Since $h^{\mathfrak{S}}_{p}(E)=E$, we have a sequence of vectors $\{x^{(m)}\}_{m=1}^{\infty}$ in $E$ such that $h^{\mathfrak{S}}_{p}(x^{(m)})=y^{(m)}$ for every $m\in\mathbb{N}$. By Lemma \ref{CtoC}, $x^{(m)}$ converges to $x$ in $\ell^{p}$. Then, it follows from the continuity of $h^{\mathfrak{S}}_{p}$ (Lemma \ref{continuous}) that $h^{\mathfrak{S}}_{p}(x)=y$. Thus, $h^{\mathfrak{S}}_{p}$ is a surjective. Suppose that there exist two distinct vectors $x, \widetilde{x}\in \ell^p$ such that $h^{\mathfrak{S}}_{p}(x)=h^{\mathfrak{S}}_{p}(\widetilde{x})=y$. Let
\[
y^{(m)}=y \ \ \text{for each} \ m\geq1,
\]
and
\[
x^{(m)}=\left\{
	\begin{array}{rcl}
		x  &    &   \text{if} \ m \ \text{is odd},\\
		\widetilde{x}    &    &   \text{if} \ m \ \text{is even}.
	\end{array}
	\right.
\]
Since the sequence $\{y^{(m)}\}_{m=1}^{\infty}$ is a Cauchy sequence and $h^{\mathfrak{S}}_{p}(x^{(m)})=y^{(m)}$ for every $m\in \mathbb{N}$, it follows from Lemma \ref{CtoC} that $\{x^{(m)}\}_{m=1}^{\infty}$ is also a Cauchy sequence. This is a contradiction. Thus, $h^{\mathfrak{S}}_{p}$ is an injective.

Applying Lemma \ref{CtoC} again, we obtain that $(h^{\mathfrak{S}}_{p})^{-1}$ is also continuous. Therefore, $h^{\mathfrak{S}}_{p}$  is a
homeomorphism from $\ell^{p}$ onto itself.
\end{proof}

\section{Topologically conjugate classification of diagonal operators}
In this section, we will discuss the topologically conjugate classification of diagonal operators on $\ell^{p}$, $1\leq p<\infty$.

\begin{lemma}\label{rotation}
Let $D_{\mathfrak{W}}$ be a bounded diagonal operator on $\ell^{p}$ with the diagonal sequence $\mathfrak{W}=\{w_{n}\}_{n=1}^{\infty}$.  Denote by $D_{\vert\mathfrak{W}\vert}$ the diagonal operator with the diagonal sequence $\vert\mathfrak{W}\vert=\{\vert w_{n}\vert\}_{n=1}^{\infty}$. If $\vert w_n\vert\neq1$ for all $n\in\mathbb{N}$, then $D_{\mathfrak{W}}$ and $D_{\vert\mathfrak{W}\vert}$ are topologically conjugate.
\end{lemma}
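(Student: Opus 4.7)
The plan is to build an explicit self-homeomorphism $h$ of $\ell^p$ that acts coordinatewise and intertwines the two operators, while preserving $|x_n|$ on each coordinate (so that $\ell^p$-summability is automatic). For each $n$, fix a representation $w_n = |w_n|e^{i\theta_n}$ (taking $h_n = \mathrm{id}$ if $w_n = 0$) and, using the hypothesis $|w_n|\neq 1$, set $\lambda_n = \theta_n/\log|w_n|$. Define $h_n:\mathbb{C}\to\mathbb{C}$ by
\[
h_n(re^{i\alpha}) = r\,e^{i(\alpha+\lambda_n\log r)} \text{ for } r>0,\qquad h_n(0)=0.
\]
Each $h_n$ is a self-homeomorphism of $\mathbb{C}$ with $|h_n(z)|=|z|$, and its inverse is obtained by replacing $\lambda_n$ with $-\lambda_n$.

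Next, set $h(x) = (h_n(x_n))_{n\geq 1}$. Because $|h_n(x_n)| = |x_n|$, the sequence $h(x)$ lies in $\ell^p$ with $\|h(x)\|_p = \|x\|_p$, and $h$ is bijective with $h^{-1}(y) = (h_n^{-1}(y_n))_n$. A one-line calculation gives the key identity
\[
h_n(|w_n|\,re^{i\alpha}) = |w_n|r\,e^{i(\alpha+\lambda_n\log r+\theta_n)} = w_n\,h_n(re^{i\alpha}),
\]
so $h\circ D_{|\mathfrak{W}|} = D_{\mathfrak{W}}\circ h$ holds coordinatewise. Thus the only remaining task is to verify that $h$ (and by symmetry $h^{-1}$) is continuous.

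The main obstacle is precisely this continuity, because $\lambda_n$ can be arbitrarily large (for instance when $|w_n|\to 1$), so the individual maps $h_n$ are neither uniformly continuous nor Lipschitz and no naive estimate in the operator norm is available. The right way around this is to exploit that $h$ is a pointwise isometry in modulus: if $x^{(m)}\to x$ in $\ell^p$, then $x_n^{(m)}\to x_n$ for each $n$, so by continuity of each $h_n$ one has $h_n(x_n^{(m)})\to h_n(x_n)$ coordinatewise, while simultaneously $\|h(x^{(m)})\|_p = \|x^{(m)}\|_p \to \|x\|_p = \|h(x)\|_p$. Combining coordinatewise convergence with convergence of norms forces $h(x^{(m)})\to h(x)$ in $\ell^p$; this is the Radon--Riesz/Kadets--Klee property of $\ell^p$ for $1\leq p<\infty$, which I would prove by the standard truncation estimate. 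Choose $N$ so that $\sum_{n>N}|h_n(x_n)|^p < \epsilon$; coordinatewise convergence handles the first $N$ terms, and convergence of the full norms combined with coordinatewise convergence on $\{1,\dots,N\}$ forces the tail $\sum_{n>N}|h_n(x_n^{(m)})|^p$ to be small for large $m$.

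The same argument applied to $h^{-1}$ (with $\lambda_n$ replaced by $-\lambda_n$) gives continuity of the inverse. Hence $h$ is a homeomorphism realizing the topological conjugacy $D_{\mathfrak{W}} = h\circ D_{|\mathfrak{W}|}\circ h^{-1}$, and the Key Lemma is not needed for this particular statement; the intertwining is achieved by a coordinate-wise modulus-preserving twist rather than by the radial rescaling $h_p^{\mathfrak{S}}$.
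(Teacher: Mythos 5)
Your construction is essentially identical to the paper's: your coordinatewise twist $h_n$ is exactly the paper's map $f_{w_n}(z)=z\,e^{\mathbf{i}\,(\ln\vert z\vert/\ln\vert w_n\vert)\,\theta_n}$, assembled into the same modulus-preserving coordinatewise homeomorphism satisfying the same intertwining identity. The proof is correct; if anything, you justify the continuity of $h$ (coordinatewise convergence plus preservation of the $\ell^p$-norm, via the truncation/Kadets--Klee argument) more carefully than the paper, which simply asserts it from the continuity of each $f_{w_n}$ together with norm preservation.
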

\begin{proof}
Given a complex number $w$ with $\vert w\vert\neq1$. If $w\neq 0$, write $w=\vert w\vert\textrm{e}^{\mathbf{i}\theta}$. We define $f_w:\mathbb{C}\rightarrow\mathbb{C}$ by $f_w(0)=0$ and for any $z\neq0$,
\[
f_w(z)=\left\{
	\begin{array}{rcl}
		z\cdot \textrm{e}^{\mathbf{i}\cdot \frac{\ln\vert z\vert}{\ln\vert w\vert} \cdot \theta}  &    &   {\text{if} \ w\neq0,}\\
		       &   &    \\
        z    &    &   {\text{if} \ w=0.}
	\end{array}
	\right.
\]
It is not difficult to see that $f_w$ is an homeomorphism on $\mathbb{C}$. In fact, $f_w^{-1}(0)=0$ and for any $z\neq0$,
\[
f_w^{-1}(z)==\left\{
	\begin{array}{rcl}
		z\cdot \textrm{e}^{-\mathbf{i}\cdot \frac{\ln\vert z\vert}{\ln\vert w\vert} \cdot \theta}      &   &   {\text{if} \ w\neq0,}\\
		       &   &    \\
        z       &   &   {\text{if} \ w=0.}
	\end{array}
	\right.
\]
Moreover, $f_w$ is a topological conjugacy from the multiplication operator $M_{\vert w \vert}$ to the multiplication operator $M_{w}$ on $\mathbb{C}$, i.e., for any $z\in\mathbb{C}$,
\begin{equation}\label{commu}
f_w(\vert w\vert \cdot z)=w\cdot f_w(z).
\end{equation}

Define $F_{\mathfrak{W}}:\ell^p\rightarrow\ell^p$ by, for any $x\in\ell^p$,
\[
\pi_n\circ F_{\mathfrak{W}}(x)=f_{w_n}(\pi_n(x)) \ \ \ \  \text{for any} \ n\in\mathbb{N}.
\]
Notice that $F_{\mathfrak{W}}$ is a norm preserving mapping. Then, by the continuity of each $f_{w_n}$, $F_{\mathfrak{W}}$ is a continuous mapping. In addition, define $F^{-1}_{\mathfrak{W}}:\ell^p\rightarrow\ell^p$ by, for any $x\in\ell^p$,
\[
\pi_n\circ F^{-1}_{\mathfrak{W}}(x)=f^{-1}_{w_n}(\pi_n(x)) \ \ \ \  \text{for any} \ n\in\mathbb{N}.
\]
Then, $F^{-1}_{\mathfrak{W}}$ is also a continuous mapping and is the inverse of $F_{\mathfrak{W}}$. Therefore, $F_{\mathfrak{W}}$ is a homeomorphism on $\ell^{p}$. Following from the equation (\ref{commu}), one can see that
\[
F_{\mathfrak{W}}\circ D_{\vert\mathfrak{W}\vert}=D_{\mathfrak{W}} \circ F_{\mathfrak{W}}
\]
Hence, $F_{\mathfrak{W}}$ is a topological conjugacy from $D_{\vert\mathfrak{W}\vert}$ to $D_{\mathfrak{W}}$.
\end{proof}

\begin{remark}
If $\vert w\vert=1$, the above conclusion may be false since every nontrivial rotation can not be topologically conjugate to the identity.
\end{remark}

\begin{theorem}\label{TCCD}
Let $D_{\mathfrak{W}}$ be a bounded diagonal operator on $\ell^{p}$ with the diagonal sequence $\mathfrak{W}=\{w_{n}\}_{n=1}^{\infty}$. If $\inf_{n}\vert w_n\vert>1$, then $D_{\mathfrak{W}}$ is topologically conjugate to $2\mathbf{I}$, where $\mathbf{I}$ means the identity operator.
\end{theorem}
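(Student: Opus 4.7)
The plan is to reduce first to positive real weights using Lemma \ref{rotation}, and then to use the homeomorphisms $h^{\mathfrak{S}}_p$ provided by the Key Lemma as conjugacies. Since $\inf_n \vert w_n\vert > 1 \neq 1$, Lemma \ref{rotation} already yields that $D_{\mathfrak{W}}$ is topologically conjugate to $D_{\vert\mathfrak{W}\vert}$, so I may assume throughout that each $w_n$ is a positive real number with $\inf_n w_n > 1$. The core observation is that $h^{\mathfrak{S}}_p$ is covariant under positive scaling: for any $\alpha > 0$ and any bounded $\mathfrak{S} = \{s_n\}$ with $s_n \geq 1$, pulling $\alpha^p$ out of each tail sum $\sum_{k\geq n}\vert \alpha x_k\vert^p$ and then $\alpha^{p s_n}$ out of each term inside the $p$-th root gives
\[
\pi_n \circ h^{\mathfrak{S}}_p(\alpha x) = \alpha^{s_n}\,\pi_n \circ h^{\mathfrak{S}}_p(x),
\]
the sign factor $\alpha x_n / \vert \alpha x_n\vert = x_n/\vert x_n\vert$ being unaffected since $\alpha>0$.

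I would then conjugate $D_{\mathfrak{W}}$ to the scalar multiple $c\mathbf{I}$, where $c := \inf_n w_n > 1$. Setting $s_n := \log_c w_n$, we have $c \leq w_n \leq \sup_n w_n < \infty$, so $1 \leq s_n \leq \log_c (\sup_n w_n)$ and $\mathfrak{S}=\{s_n\}$ meets the hypotheses of the Key Lemma, while $c^{s_n} = w_n$ by construction. The Key Lemma produces a homeomorphism $h^{\mathfrak{S}}_p$ of $\ell^p$, and the scaling identity above specialises to
\[
\pi_n \circ h^{\mathfrak{S}}_p(cx) = c^{s_n}\,\pi_n \circ h^{\mathfrak{S}}_p(x) = w_n\,\pi_n \circ h^{\mathfrak{S}}_p(x) = \pi_n \circ D_{\mathfrak{W}}(h^{\mathfrak{S}}_p(x)),
\]
which is precisely $h^{\mathfrak{S}}_p \circ (c\mathbf{I}) = D_{\mathfrak{W}} \circ h^{\mathfrak{S}}_p$. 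Hence $c\mathbf{I}$ and $D_{\mathfrak{W}}$ are topologically conjugate.

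It remains to show $c\mathbf{I}$ is topologically conjugate to $2\mathbf{I}$, which I would obtain from a second, easier application of the Key Lemma with a constant exponent sequence. Let $\alpha := \min(c,2)$, $\beta := \max(c,2)$, and $\sigma := \log_\alpha \beta$. Then $\alpha > 1$ and $\sigma \geq 1$, so the constant sequence $s_n \equiv \sigma$ is admissible; the scaling identity now gives a homeomorphism $h$ of $\ell^p$ with $h(\alpha x) = \alpha^\sigma h(x) = \beta h(x)$, hence $\alpha\mathbf{I} \sim \beta\mathbf{I}$, and in particular $c\mathbf{I} \sim 2\mathbf{I}$. Composing the two conjugacies finishes the proof. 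The one real subtlety I foresee is that the Key Lemma requires $s_n \geq 1$: this is exactly why the logarithm must be taken in base $c$ (rather than directly in base $2$) in the first step, in order to guarantee $s_n = \log_c w_n \geq 1$, and why the adjustment from $c$ to $2$ has to be carried out as a separate (but trivial) second step.
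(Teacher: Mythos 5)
Your proof is correct and follows essentially the same route as the paper: reduce to $\vert\mathfrak{W}\vert$ via Lemma \ref{rotation}, conjugate $c\mathbf{I}$ (with $c=\inf_{n}\vert w_n\vert$) to $D_{\vert\mathfrak{W}\vert}$ using $h^{\mathfrak{S}}_{p}$ with $s_n=\log_{c}\vert w_n\vert$, and then link $c\mathbf{I}$ to $2\mathbf{I}$ by a second application with a constant exponent sequence. If anything, your second step is slightly more careful than the paper's, which takes the constant sequence $\log_{\rho}2$ without noting that this is $<1$ when $\rho>2$; your $\min/\max$ choice of the base ensures the exponent is always $\geq 1$ as the Key Lemma requires.
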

\begin{proof}
Let $\rho=\inf_{n}\vert w_n\vert>1$. For any $n\in\mathbb{N}$, let $s_n=\log_{\rho}\vert w_n\vert$. Denote $\mathfrak{S}=\{s_{n}\}_{n=1}^{\infty}$. Since $D_{\mathfrak{W}}$ be a bounded diagonal operator, we have $s_n\geq1$ for every $n\in\mathbb{N}$. Recall that the map $h^{\mathfrak{S}}_{p}$ on $\ell^{p}$  is defined by
\[
\pi_{n}\circ h^{\mathfrak{S}}_{p}(x)=\frac{x_{n}}{\vert x_{n}\vert} \cdot \sqrt[p]{(\sum_{k=n}^{\infty}\vert x_{k}\vert^{p})^{s_{n}}-(\sum_{k=n+1}^{\infty}\vert x_{k}\vert^{p})^{s_{n}}},
\]
for any $x=(x_{1},x_{2},\ldots)\in \ell^{p}$.  Following from the Key Lemma, the map $h^{\mathfrak{S}}_{p}$ defined previously is a homeomorphism from $\ell^{p}$ onto itself. Notice that for any $x\in\ell^{p}$,
\[
h^{\mathfrak{S}}_{p}(\rho \cdot x)=D_{\vert\mathfrak{W}\vert}(h^{\mathfrak{S}}_{p}(x)).
\]
Then, $h^{\mathfrak{S}}_{p}$ is a topological conjugacy from $\rho\mathbf{I}$ to $D_{\vert\mathfrak{W}\vert}$. Furthermore, if we choose $\mathfrak{S}'$ be the sequence of constant $\log_{\rho}2$, it is not difficult to see that $h^{\mathfrak{S}'}_{p}$ is a topological conjugacy from $\rho\mathbf{I}$ to $2\mathbf{I}$. Therefore, by Lemma \ref{rotation}, $D_{\mathfrak{W}}$ is topologically conjugate to $2\mathbf{I}$.
\end{proof}

\begin{remark}
The above conclusion is obviously true for finite dimensional space $\mathbb{C}^m$. In fact,  we may choose a more simple topological conjagacy $h_m:\mathbb{C}^m\rightarrow\mathbb{C}^m$ defined by
\[
\pi_{n}\circ h_m(x)=\frac{x_{n}}{\vert x_{n}\vert} \cdot \vert x_{n}\vert^{s_n}, \ \  \ \text{for} \ \ n=1, \ldots, m.
\]
However, if we define $h$ by
\[
\pi_{n}\circ h(x)=\frac{x_{n}}{\vert x_{n}\vert} \cdot \vert x_{n}\vert^{s_n}, \ \  \ \text{for} \ \ n\in\mathbb{N},
\]
$h$ is not a continuous map on the infinite dimensional Banach space $\ell^p$. So, our Key Lemma plays an important role in the present paper.
\end{remark}

Similarly, we could also obtain that all of the bounded invertible diagonal operators with norm less than $1$ are topologically conjugate.

\begin{corollary}\label{TCCD01}
Let $D_{\mathfrak{W}}$ be a bounded diagonal operator on $\ell^{p}$ with the diagonal sequence $\mathfrak{W}=\{w_{n}\}_{n=1}^{\infty}$. If $\inf_{n}\vert w_n\vert>0$ and $\sup_{n}\vert w_n\vert<1$, then $D_{\mathfrak{W}}$ is topologically conjugate to $\frac{1}{2}\mathbf{I}$.
\end{corollary}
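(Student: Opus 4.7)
The plan is to reduce Corollary \ref{TCCD01} directly to Theorem \ref{TCCD} by passing to the inverse of $D_{\mathfrak{W}}$. Under the hypotheses $\inf_{n}|w_n|>0$ and $\sup_{n}|w_n|<1$, the operator $D_{\mathfrak{W}}$ is a bounded invertible linear map on $\ell^{p}$, hence in particular a homeomorphism; its inverse is the diagonal operator $D_{\mathfrak{W}^{-1}}$ with diagonal sequence $\mathfrak{W}^{-1}=\{1/w_n\}_{n=1}^{\infty}$. The key numerical observation is
\[
\inf_{n}\bigl|1/w_n\bigr| \;=\; \frac{1}{\sup_{n}|w_n|} \;>\; 1,
\]
so $D_{\mathfrak{W}^{-1}}$ satisfies exactly the hypothesis of Theorem \ref{TCCD}.

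Applying Theorem \ref{TCCD} to $D_{\mathfrak{W}^{-1}}$ produces a homeomorphism $H:\ell^{p}\rightarrow\ell^{p}$ with $H\circ D_{\mathfrak{W}^{-1}}=2\mathbf{I}\circ H$, equivalently $2\mathbf{I}=H\circ D_{\mathfrak{W}^{-1}}\circ H^{-1}$. Taking inverses on both sides (valid because $H$, $D_{\mathfrak{W}^{-1}}$, and $2\mathbf{I}$ are all homeomorphisms of $\ell^{p}$) yields
\[
\tfrac{1}{2}\mathbf{I}\;=\;H\circ D_{\mathfrak{W}}\circ H^{-1},
\]
so the same $H$ is a topological conjugacy from $D_{\mathfrak{W}}$ to $\tfrac{1}{2}\mathbf{I}$.

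The argument is essentially formal once Theorem \ref{TCCD} is in hand, and there is no real obstacle: the only small point to verify is that topological conjugacy between invertible maps passes to their inverses, which is immediate from the algebraic identity above. An alternative direct route would mimic the proof of Theorem \ref{TCCD} by setting $\rho=\sup_{n}|w_n|<1$ and $s_n=\log_{\rho}|w_n|$; since $\log_{\rho}$ is decreasing on $(0,1]$, the condition $|w_n|\leq\rho$ gives $s_n\geq1$ and $\inf_{n}|w_n|>0$ keeps $\{s_n\}$ bounded, so the Key Lemma produces a homeomorphism conjugating $\rho\mathbf{I}$ to $D_{|\mathfrak{W}|}$, which one then matches with $\tfrac{1}{2}\mathbf{I}$ via a second application of the Key Lemma and Lemma \ref{rotation}. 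This direct route requires splitting cases according to whether $\rho\geq\tfrac{1}{2}$ or $\rho<\tfrac{1}{2}$ (to keep all exponents $\geq 1$), whereas the inverse-operator reduction avoids this bookkeeping entirely.
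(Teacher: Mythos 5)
Your proposal is correct. The paper itself gives no written proof of Corollary \ref{TCCD01} --- it only says ``Similarly, we could also obtain\ldots'', i.e.\ it intends the reader to rerun the proof of Theorem \ref{TCCD} with $\rho=\sup_n|w_n|<1$ and $s_n=\log_\rho|w_n|$. Your primary argument takes a genuinely different and cleaner route: since $\inf_n|w_n|>0$, the inverse $D_{\mathfrak{W}}^{-1}=D_{\mathfrak{W}^{-1}}$ is a bounded diagonal operator with $\inf_n|1/w_n|=1/\sup_n|w_n|>1$, Theorem \ref{TCCD} gives a homeomorphism $H$ with $2\mathbf{I}=H\circ D_{\mathfrak{W}^{-1}}\circ H^{-1}$, and inverting both sides of this identity gives $\tfrac12\mathbf{I}=H\circ D_{\mathfrak{W}}\circ H^{-1}$. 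Every step checks out: $D_{\mathfrak{W}}$ is indeed a linear homeomorphism under the stated hypotheses, and conjugacy of invertible maps passes to inverses by the algebra you wrote. What this buys is that the corollary becomes a two-line formal consequence of the theorem, with no need to reverify that the exponents fed to the Key Lemma stay $\geq 1$. Your sketch of the paper's intended direct route is also accurate, and you correctly flag the one point the paper glosses over: the second application of the Key Lemma (matching $\rho\mathbf{I}$ with $\tfrac12\mathbf{I}$) needs the constant exponent $\log_\rho\tfrac12$ (or its reciprocal) to be $\geq 1$, which forces a harmless case split according to whether $\rho\geq\tfrac12$ or $\rho<\tfrac12$, resolved by orienting the conjugacy appropriately. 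Either route is acceptable; yours is the more economical.
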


In the Theorem \ref{TCCD}, the condition $\inf_{n}\vert w_n\vert>1$ is necessary. We show the necessity as follows.

\begin{proposition}\label{NTC}
Let $D_{\mathfrak{W}}$ and $D_{\mathfrak{T}}$ be two bounded diagonal operators on $\ell^{p}$ with the diagonal sequence $\mathfrak{W}=\{w_{n}\}_{n=1}^{\infty}$ and $\mathfrak{T}=\{t_{n}\}_{n=1}^{\infty}$, respectively. If $\inf_{n}\vert w_n\vert=1$ and $\inf_{n}\vert t_n\vert>1$, then $D_{\mathfrak{W}}$ and $D_{\mathfrak{T}}$ are not topologically conjugate.
\end{proposition}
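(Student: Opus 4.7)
The plan is a proof by contradiction. Suppose $h\colon\ell^{p}\to\ell^{p}$ is a conjugacy with $h\circ D_{\mathfrak{W}} = D_{\mathfrak{T}}\circ h$. If some $w_{m}=1$, then $e_{m}\neq 0$ is a fixed point of $D_{\mathfrak{W}}$, while $\mathrm{Fix}(D_{\mathfrak{T}})=\{0\}$ since $|t_{m}|>1$ forces $t_{m}\neq 1$; as conjugacies biject fixed point sets, no such $h$ can exist. Hence assume $w_{m}\neq 1$ for every $m$, so $\mathrm{Fix}(D_{\mathfrak{W}})=\{0\}$ too, and the bijection forces $h(0)=0$.

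The core idea is that $h$ would have to transport the uniform contraction of $D_{\mathfrak{T}}^{-1}$ into a uniform topological shrinking of neighborhoods of $0$ under $D_{\mathfrak{W}}^{-1}$, something $D_{\mathfrak{W}}$ cannot afford. By continuity of $h$ at $0$, pick $\delta>0$ with $B(0,\delta)\subset A := h^{-1}(B(0,1))$. Since $\inf_{m}|w_{m}|=1>0$ and $\inf_{m}|t_{m}|>1>0$, both diagonal operators are invertible homeomorphisms, and iterating the conjugacy identity gives
\[
D_{\mathfrak{W}}^{-n}(A)=h^{-1}\bigl(D_{\mathfrak{T}}^{-n}(B(0,1))\bigr).
\]
With $\rho := \inf_{m}|t_{m}|>1$ the estimate $\|D_{\mathfrak{T}}^{n}y\|\geq \rho^{n}\|y\|$ yields $D_{\mathfrak{T}}^{-n}(B(0,1))\subset B(0,\rho^{-n})$. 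Combined with continuity of $h^{-1}$ at $0$ (using $h^{-1}(0)=0$), this produces an $N$ such that $D_{\mathfrak{W}}^{-n}(A)\subset B(0,\delta/3)$ for all $n\geq N$.

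The hypothesis $\inf_{m}|w_{m}|=1$ now defeats this conclusion. For each $n\geq 1$, since $1<2^{1/n}$, pick $m_{n}$ with $|w_{m_{n}}|<2^{1/n}$ and set $x_{n}:=(\delta/2)\,e_{m_{n}}$. Then $\|x_{n}\|=\delta/2$ while
\[
\|D_{\mathfrak{W}}^{n}x_{n}\|=(\delta/2)\,|w_{m_{n}}|^{n}<\delta,
\]
so $D_{\mathfrak{W}}^{n}x_{n}\in B(0,\delta)\subset A$, giving $x_{n}\in D_{\mathfrak{W}}^{-n}(A)$. For $n\geq N$ this forces $x_{n}\in B(0,\delta/3)$, contradicting $\|x_{n}\|=\delta/2>\delta/3$. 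The hardest part of the argument is isolating this invariant: one has to notice that the slow-escape coordinate axes $\mathbb{C}\,e_{m_{n}}$ along which $|w_{m_{n}}|$ approaches $1$ prevent any homeomorphism of $\ell^{p}$ from transporting a uniformly contracting inverse to $D_{\mathfrak{W}}^{-1}$, and then convert this into a contradiction using only continuity of $h$ and $h^{-1}$ at the single fixed point $0$.
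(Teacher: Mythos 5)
Your proof is correct and follows essentially the same strategy as the paper's: the key construction is the same family of test vectors $c\,e_{m_n}$ with $\vert w_{m_n}\vert^{n}$ bounded, combined with continuity of $h$ and $h^{-1}$ at the common fixed point $0$; you merely run the argument through inverse iterates and compare with $D_{\mathfrak{T}}$ directly via $\parallel D_{\mathfrak{T}}^{n}y\parallel\geq\rho^{n}\parallel y\parallel$ rather than first normalizing to $2\mathbf{I}$, and you dispose of the degenerate case by counting fixed points (with the unimodular-but-not-$1$ weights absorbed into the main argument) where the paper uses invariant circles. Both versions are sound.
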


\begin{proof}
By Theorem \ref{TCCD}, it suffices to prove that $D_{\mathfrak{W}}$ and ${2}\mathbf{I}$ are not topologically conjugate.

Suppose that there exists $n\in\mathbb{N}$ such that $\vert w_n \vert=1$. Then $D_{\mathfrak{W}}$ has an invariant compact subset homeomorphic to the unit circle. Notice that
${2}\mathbf{I}$ has the unique invariant compact subset $\{0\}$. Thus, $D_{\mathfrak{W}}$ is not topologically conjugate to ${2}\mathbf{I}$ in this case.

Now suppose that $\vert w_n \vert>1$ for all $n\in\mathbb{N}$. Assume that $h$ is a topological conjugacy from $D_{\mathfrak{W}}$ to ${2}\mathbf{I}$. Then, $h(0)=0$ since $0$ is the unique fixed point. By the continuity of $h$, there is a positive number $\epsilon$ such that $h(\overline{B(0, 2\epsilon)})\subseteq B(0,1)$, where $B(0,2\epsilon)$ denotes the set $\{x\in \ell^{p}; ~\parallel x\parallel_p<2\epsilon\}$. Moreover, by the continuity of $h^{-1}$, there is a positive number $\delta$ such that $h^{-1}(\overline{B(0, \delta)})\subseteq B(0,\epsilon)$, i.e., $\overline{B(0, \delta)}\subseteq h(B(0,\epsilon))$. Since $\inf_{n}\vert w_n\vert=1$, there exists a subsequence $\{n_k\}_{k=1}^{\infty}$ such that $\vert w_{n_k}\vert^{k}\leq 2$. Let
\[
x^{(k)}=(0,0,\ldots,0,\underbrace{\epsilon}_{n_{k}-\text{th}},0,\ldots).
\]
Then,
\[
D_{\mathfrak{W}}^{k}(x^{(k)})=(0,0,\ldots,0, \underbrace{\vert w_{n_k}\vert^{k}\cdot\epsilon}_{n_{k}-\text{th}}, 0,\ldots)\in \overline{B(0, 2\epsilon)}.
\]
Since $h$ is a homeomorphism, one can see that
\[
h(x^{(k)})\in h(\overline{B(0, 2\epsilon)})\setminus \overline{B(0, \delta)}\subseteq B(0,1)\setminus \overline{B(0, \delta)}
\]
and
\[
h(D_{\mathfrak{W}}^{k}(x^{(k)}))\in h(\overline{B(0, 2\epsilon)})\setminus \overline{B(0, \delta)}\subseteq B(0,1).
\]
However,
\[
\parallel h(D_{\mathfrak{W}}^{k}(x^{(k)}))\parallel=\parallel 2^kh(x^{(k)})\parallel\geq 2^k\cdot\delta\rightarrow \infty \ \ \ \text{as} \ k\rightarrow\infty.
\]
This is a contradiction.
\end{proof}

\begin{example}
Let $\mathfrak{W}=\{w_{n}=1+1/n\}_{n=1}^{\infty}$. One can see that the dynamical properties of  diagonal operator $D_{\mathfrak{W}}$ and ${2}\mathbf{I}$ are almost the same. However, by Proposition \ref{NTC}, $D_{\mathfrak{W}}$ and ${2}\mathbf{I}$ are not topologically conjugate.
\end{example}

\section*{Declarations}

\noindent \textbf{Ethics approval}

\noindent Not applicable.

\noindent \textbf{Competing interests}

\noindent The author declares that there is no conflict of interest or competing interest.

\noindent \textbf{Authors' contributions}

\noindent All authors contributed equally to this work.

\noindent \textbf{Availability of data and materials}

\noindent Data sharing is not applicable to this article as no data sets were generated or analyzed during the current study.

\end{document}